\documentclass[12pt]{amsart}

\usepackage[ansinew]{inputenc}

\usepackage{amscd}
\usepackage{enumitem}
\usepackage{float}

\usepackage{verbatim}

\usepackage{hyperref}
\usepackage{amssymb, amsmath,amsthm}
\usepackage{graphicx}
\usepackage{tikz-cd}
\usepackage{pdfsync}

\newtheorem{fakt}{fakt}[]



\newtheorem{Lemma}[fakt]{Lemma}

\newtheorem{Theorem}[fakt]{Theorem}

\theoremstyle{definition}

\newtheorem{Example}[fakt]{Example}

\newtheorem{Remark}[fakt]{Remark}

\newcommand{\N} {{\mathbb N}} \newcommand{\Z} {{\mathbb Z}}   \newcommand{\C} {{\mathbb C}}

\newcommand{\Spec}{\operatorname{Spec}}
\newcommand{\MaxSpec}{\operatorname{MaxSpec}}

\newcommand{\val}{\operatorname{val}}

\newcommand{\cont}[1]{#1^{\text{cont}}}
\newcommand{\ax}[1]{#1^{\text{ax}}}

\newcommand{\iic}[1]{#1_{>1}}

\newcommand{\ic}[1]{\overline{#1}}
\newcommand{\tight}[1]{#1^{*}}
\newcommand{\Ktight}[2]{{#2}^{{#1}*}}
\newcommand{\sptight}[1]{#1^{*\text{sp}}}



\begin{document}
\title{Tight closure and continuous closure}

\author[Holger Brenner]{Holger Brenner}
\address{Holger Brenner\\
Institut f\"ur Mathematik\\
Universit\"at Osnabr\"uck\\
Albrechtstr. 28a\\
49076 Osnabr\"uck
}
\curraddr{}
\email{hbrenner@uni-osnabrueck.de}
\thanks{}

\author[Jonathan Steinbuch]{Jonathan Steinbuch}
\address{Jonathan Steinbuch\\
Institut f\"ur Mathematik\\
Universit\"at Osnabr\"uck\\
Albrechtstr. 28a\\
49076 Osnabr\"uck
}
\curraddr{}
\email{jonathan.steinbuch@uni-osnabrueck.de}
\thanks{}

\begin{abstract}
We show that for excellent, normal equicharacteristic rings with perfect residue fields the tight closure of an ideal is contained in its axes closure. First we prove this for rings in characteristic $p$.
This is achieved by using the notion of special tight closure established by Huneke and Vraciu.

By reduction to positive characteristic we show that the containment of tight closure in axes closure also holds in characteristic $0$.
From this we deduce that for a normal ring of finite type over $\C$ the tight closure of a primary ideal is inside its continuous closure.
\end{abstract}

\maketitle

\section*{Introduction}

The \emph{tight closure} $\tight{I}$ of an ideal $I$ in a commutative ring $R$ of characteristic $p$ is the ideal consisting of the elements $f\in R$ such that there exists a $c\in R$ not contained in any minimal prime of $R$ (for example a nonzerodivisor) for which we have $cf^q \in I^{[q]}$ for all powers $q=p^e$ of $p$ that are large enough \cite{hochhuntightbrian}.

The \emph{continuous closure} $\cont{I}$ of an ideal $I=(f_1, \ldots , f_n)$ in a finitely generated $\C$-algebra $R$ is the ideal consisting of the elements $f\in R$ such that there are continuous functions $g_1,\ldots, g_n: \Spec{R}(\C)\rightarrow \C$ in the complex topology such that $f=g_1f_1+\ldots+g_nf_n$ holds as an equation of continuous functions \textbf{\cite{Bre06a}}.

In this paper we are concerned with the question whether tight closure is contained in continuous closure.
As continuous closure is a strictly characteristic $0$ concept and tight closure is originally a characteristic $p$ concept, one first has to find the right common ground to compare these notions. 
A first common ground is that both closures are contained in the integral closure of the ideal.
An element $f$ belongs to the integral closure of $I$ if and only if for all ring homomorphisms $\varphi:R \rightarrow V$ to a discrete valuation domain we have $\varphi(x) \in IV$, so for integral closure there is a class of onedimensional test rings.

The \emph{axes closure} $\ax{I}$ of an ideal $I$ in a commutative ring $R$ is the ideal consisting of the elements $f$ such that for any ring homomorphism $\varphi: R \rightarrow S$ to a ring of axes we have $\varphi(f)\in \varphi(I)S$.
A \emph{ring of axes} is a reduced ring whose associated scheme consists of normal curves which intersect transversally in one point, in the sense that the embedding dimension is the number of curves.
One can however take slightly different classes of test rings that yield the same closure as has been studied in detail in \cite[Section 4]{epshoch}.

The axes closure has been introduced in \cite{Bre06a} as an attempt to characterize algebraically the continuous closure though it has failed to exactly do that, since the two closures are different as N. Epstein and M. Hochster have shown \cite[Example 9.2]{epshoch}.
Still they are close: We always have $\cont{I}\subseteq \ax{I}$ and they coincide for primary ideals. An algebraic characterization of continuous closure was given by J. Koll\'{a}r in \cite{kollar}. 

In this paper we prove that the tight closure of an ideal in an excellent, normal ring with perfect residue fields at the maximal ideals is contained in its axes closure. We show this for rings of characteristic $p$ as well as of equal characteristic $0$. The characteristic $0$ case is done by reduction to positive characteristic.

The containment of tight closure inside the continuous closure is strict. A basic result of tight closure theory tells us that tight closure of an ideal in a regular ring is the ideal itself, but in the polynomial ring $\C[X,Y]$ we have $X^2Y^2 \in \cont{ (X^3,Y^3) }$. 

The inclusion result doesn't hold for nonnormal domains. In the ring $R=K[X,Y,Z]/(X^2-YZ^2)$ we have $ \left( \frac{X}{Z} \right)^2 =Y$ and its  normalization is $K[U,Z]$ with $U= \frac{X}{Z} $. Therefore $X \in \tight{(Z)}$ in $R$. The map
\[R \rightarrow K[X,Z]/(X^2-Z^2)=K[X,Z]/(X+Z)(X-Z),\, Y \mapsto 1, X \mapsto X, Z \mapsto Z \] 
shows however that $X$ does not belong to the axes closure of $Z$ and hence for $K=\C$ not to the continuous closure.

\section*{Positive Characteristic}

We prove the main result in positive characteristic using a result of Huneke and Vraciu about the decomposition of tight closure in normal rings. 
Namely, in \cite[Theorem 2.1]{hunekevraciu} they show that for a local, excellent, normal ring of characteristic $p$ the tight closure of any ideal can be written as $$\tight{I}=I+\sptight{I}.$$
Here $\sptight{I}$ denotes the \emph{special tight closure}, which is the ideal of all elements $f$ for which there exists a $q_0>0$ such that $f^{q_0}\in \left(\mathfrak{m}I^{[q_0]}\right)^*$.

We will test axes closure with complete, excellent, local, one-di\-men\-sion\-al, seminormal rings as \cite[Theorem 4.1(3)]{epshoch} says we can do. 
These are very similar to complete axes rings.

In fact, according to \cite[Theorem 3.3]{epshoch}, these rings can be constructed as follows:
Let $\mathfrak{m}$ be the unique maximal ideal of $R$ and $k=R/\mathfrak{m}$ the residue field.
Then $R$ is isomorphic to a subring of the product ring $\prod_{i=1}^{n}V_i$, where $(V_i, \mathfrak{m}_i,L_i)$ are discrete valuation rings whose residue fields are finite extension fields of $k$.

In this product, $R$ is the subring of all elements $(v_1,\ldots, v_n) \in \prod_{i=1}^{n}V_i$ such that the $v_i \mod \mathfrak{m}_i$ are congruent to the same element $\alpha$ in $k$.
The units of $R$ are exactly the elements congruent to a non-zero $\alpha$.

We define functions $\val_i:R\rightarrow \N\cup\{\infty\}, (v_1,\ldots, v_n) \mapsto \val_{V_i}(v_i)$, where $\val_{V_i}$ is the valuation of $V_i$.
Let $\val_i(I):=\min\{\val_i(f) : f\in I\}$.

Over an algebraically closed field of equal characteristic every complete, local, one-dimensional seminormal ring is isomorphic to a complete axes ring and vice versa, a result for which in \cite[Proposition 3.4]{epshoch} Bombieri \cite{bom73} is credited.
For complete axes rings, \cite[Corollary 3.4]{Bre06a} gives a valuative criterion for ideal membership which can be extended with the same methods as follows.

\begin{Lemma}
\label{brecor34}
Let $R$ be a complete, excellent, local, one-dimensional, seminormal ring, $I\subseteq R$ an ideal in $R$ and $f\in R$. If $\val_i(f) > \val_i(I)$ for all $1\leq i\leq n$, then $f\in I$.
\end{Lemma}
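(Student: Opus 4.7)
The plan is to work inside the concrete realisation of $R$ as a subring of $\prod_{i=1}^{n} V_i$ described just above, and to write $f$ as a very explicit $R$-linear combination of selected elements of $I$. The strict inequalities $\val_i(f) > \val_i(I)$ are exactly what is needed for the construction to stay inside $R$.

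First, for each $i$, I would pick $h_i \in I$ realising the minimum $\val_i(h_i) = \val_i(I) =: m_i$ (attained, e.g., on any finite generating set of $I$). Writing $f = (f_1, \ldots, f_n)$ and $h_i = (h_i^{(1)}, \ldots, h_i^{(n)})$ in the coordinates of $\prod V_j$, the hypothesis $\val_i(f_i) > m_i = \val_i(h_i^{(i)})$ makes the quotient $q_i := f_i / h_i^{(i)}$ a well-defined element of $\mathfrak{m}_i \subseteq V_i$. I then form the ``axis element'' $c_i = (0, \ldots, 0, q_i, 0, \ldots, 0) \in \prod V_j$ supported only in the $i$-th factor; because $q_i \in \mathfrak{m}_i$ and the other coordinates are zero, every residue of $c_i$ in $L_j$ vanishes, so the diagonal residue condition characterising $R$ inside $\prod V_j$ is satisfied and $c_i \in R$.

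A coordinatewise computation then gives $\sum_{i=1}^{n} c_i h_i = f$ in $\prod V_j$: in position $j$ only the summand $i = j$ is nonzero, contributing $q_j h_j^{(j)} = f_j$. Hence $f = \sum c_i h_i$ with each $c_i \in R$ and each $h_i \in I$, so $f \in I$. The only potentially delicate step is producing axis elements that actually lie in the subring $R$, and the strict valuation hypothesis $\val_i(f) > \val_i(I)$ is precisely what forces $q_i \in \mathfrak{m}_i$ and makes this construction work; without the strict inequality, the $q_i$ would only be guaranteed to lie in $V_i$, the residues of $c_i$ would no longer all be zero, and the argument would break.
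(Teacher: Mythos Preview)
Your argument is correct and is essentially the paper's own proof: you choose for each $i$ an element of $I$ realising $\val_i(I)$, form the quotient with $f_i$ in $V_i$, and observe that the strict valuation inequality puts this quotient into $\mathfrak{m}_i$, so the corresponding axis element lies in $R$. The paper does the same thing, only splitting your $q_i$ as $x_i h_i$ with $x_i$ a uniformiser of $V_i$; your direct quotient is marginally cleaner but the content is identical.
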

\begin{proof}
For every $i\in \{1,\ldots,n\}$ there is, by definition, a $g_i\in I$ with $\val_i(g_i)=\val_i(I)$.
Let $x_i$ be the generator of the maximal ideal $\mathfrak{m}_i$. We can interpret it as an element $(0,\ldots,0,x_i,0,\ldots,0)$ of $R$, where $x_i$ is in the $i$-th position. The product $g_i\cdot x_i$ has order ${\val_i(I)+1}$ in $V_i$. Hence there exists $h_i \in V_i$ such that $f_i =g_ix_ih_i$, where $f_i$ is the $i$th component of $f$. The element $x_ih_i$ has positive order, so its value modulo $\mathfrak{m}_i$ is $0$ and thus there exists the global element
\[ y_i= (0, \ldots, 0, x_ih_i,0 \ldots ,0) \in R \, .\]
So we can write
\[f= (f_1, \ldots, f_n)=  \sum_{i=1}^n g_i y_i \] and hence $f\in I$.
\end{proof}
Note that this is also true if $\val_i(f) = \val_i(I)=\infty$ for some $i$, we thus consider $\infty > \infty$ for the purpose of this Lemma.

\begin{Theorem}\label{pTightInAx}
Let $R$ be an excellent, normal ring of characteristic $p$ such that the residue field at every maximal ideal is perfect.  Let $I$ be an ideal which is primary to a maximal ideal. Then $\tight{I}\subseteq \ax{I}$.
\end{Theorem}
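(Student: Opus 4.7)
My plan is to combine the Huneke-Vraciu decomposition $\tight{I} = I + \sptight{I}$ with the valuation criterion of Lemma \ref{brecor34}, exploiting the fact that the extra factor of $\mathfrak{m}$ in the definition of $\sptight{I}$ is precisely what produces the strict valuation inequalities that the lemma demands.

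First I would reduce to the case that $R$ is local. Localising at the maximal ideal $\mathfrak{m}$ preserves excellence, normality and perfectness of the residue field; the defining relation $cf^q \in I^{[q]}$ persists over $R_\mathfrak{m}$, so $f \in \tight{I}$ passes to $R_\mathfrak{m}$; and any test map $\varphi \colon R \to S$ either has $\varphi(I)S = S$ (trivial, since $I$ is $\mathfrak{m}$-primary so a non-local $\varphi$ sends $I$ into the unit ideal) or is local and therefore factors through $R_\mathfrak{m}$, so $\ax{I}$-membership in $R_\mathfrak{m}$ implies $\ax{I}$-membership in $R$. With $R$ now local, Huneke-Vraciu gives $\tight{I} = I + \sptight{I}$; since $I \subseteq \ax{I}$ trivially, it suffices to treat $f \in \sptight{I}$, so that $f^{q_0} \in (\mathfrak{m} I^{[q_0]})^*$ for some $q_0$, and hence, using that tight closure lies in integral closure, $f^{q_0} \in \ic{\mathfrak{m} I^{[q_0]}}$.

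Next I fix a test map $\varphi \colon R \to S$ to a complete, excellent, local, one-dimensional, seminormal ring as in Lemma \ref{brecor34}, which we may assume to be local, and for each $i$ I let $\psi_i \colon R \to V_i$ denote the composition of $\varphi$ with the $i$-th projection to the DVR $V_i$. Integral closure is preserved by ring maps and every ideal in a DVR is integrally closed, so $\psi_i(f)^{q_0} \in \psi_i(\mathfrak{m})\psi_i(I)^{[q_0]}V_i$. Applying $\val_i$ and dividing by $q_0$ yields
\[ \val_i(\psi_i(f)) \geq \val_i(\psi_i(I)V_i) + \tfrac{1}{q_0}\val_i(\psi_i(\mathfrak{m})V_i). \]
Locality of $\varphi$ gives $\val_i(\psi_i(\mathfrak{m})V_i) \geq 1$ in the non-degenerate case, and integer-valuedness of $\val_i$ then upgrades this weak bound to the strict inequality $\val_i(\psi_i(f)) > \val_i(\psi_i(I)V_i)$. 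Applying Lemma \ref{brecor34} to $\varphi(f)$ and $\varphi(I)S$ inside $S$ delivers $\varphi(f) \in \varphi(I)S$, as required.

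The main obstacle I foresee is not a conceptual one but the bookkeeping at the edge cases: when $\psi_i(\mathfrak{m}) = 0$ the displayed bound is vacuous, but then $\psi_i$ factors through $R/\mathfrak{m}$, so $\psi_i(I) = 0$, and via $f \in \tight{I} \subseteq \ic{I} \subseteq \mathfrak{m}$ also $\psi_i(f) = 0$; both valuations equal $\infty$ and the convention recorded in the remark after Lemma \ref{brecor34} applies. The conceptual weight of the argument sits entirely in the Huneke-Vraciu decomposition, which contributes the extra factor of $\mathfrak{m}$ that is needed to convert the weak valuation bound coming from integral closure into the strict bound demanded by Lemma \ref{brecor34}.
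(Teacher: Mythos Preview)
Your proposal is correct and follows essentially the same route as the paper: reduce to the local case, invoke the Huneke--Vraciu decomposition, push the containment $f^{q_0}\in(\mathfrak{m}I^{[q_0]})^*$ down to each DVR component $V_i$, extract the strict valuation inequality from the extra $\mathfrak{m}$-factor, and finish with Lemma~\ref{brecor34}. The only cosmetic difference is that you pass through integral closure ($\tight{J}\subseteq\ic{J}$, then persistence of integral closure to the DVR where $\ic{J}=J$), whereas the paper invokes persistence of tight closure directly, citing completely stable weak test elements; both routes land on the same containment in $V_i$.
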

\begin{proof}
Let $I$ be primary to the maximal ideal $\mathfrak m$. \cite[Theorem 4.1(3)]{epshoch} says that we can test the containment in the axes closure on complete, excellent, local, one-dimensional, seminormal rings $S$ with a ring homomorphism $\varphi:R\rightarrow S$. 
Thus we take such a ring and test whether the image of an element $f\in \tight {I}$ is in $IS:=\varphi(I)S$.

Let $\mathfrak{n}$ be the unique maximal ideal of $S$. If $\mathfrak{m} \neq \varphi^{-1}(\mathfrak{n})$ then $\mathfrak{m} \not\subseteq \varphi^{-1}(\mathfrak{n})$ and so $I$ extends to the unit ideal in $S$ and then $f \in IS $. So we can assume that $\mathfrak{m}=\varphi^{-1}(\mathfrak{n})$. 
Then $\varphi$ factors through $R_\mathfrak{m}$. 
We also have $f\in \tight{I}R_\mathfrak{m}\subseteq \tight{(IR_\mathfrak{m})}$ by the persistence of tight closure.  
This means it suffices to show $f\in IS$ in the case that $R$ is local (in effect we exchange $R$ with $R_\mathfrak{m}$) and $R \rightarrow S$ is a local homomorphism. 
This together with the other conditions on $R$ gives us a decomposition $\tight{I}=I+\sptight{I}$ \cite[Theorem 2.1]{hunekevraciu}. 
Thus we write $f=g+h$, where $g\in I$ and $h\in \sptight{I}$. 

By the definition of special tight closure there exists a $q_0>0$ such that $h^{q_0}\in \left(\mathfrak{m}I^{[q_0]}\right)^*$. 

As above we can write $S$ as the subring of a product $\prod_{i=1}^{n}V_i$.
Let $p_i:S\rightarrow V_i$ be the $i$-th projection.
Let us denote $I_i:= I V_i $ and $h_i:=p_i(\varphi(h))$ and $\mathfrak{a}_i = \mathfrak{a} V_i$ with $\mathfrak{a}=\mathfrak{m}S$.
Note that $\val_{V_i}(\mathfrak{a}_i) >0 $ as $\mathfrak{a}_i$ contains no units. 

In a discrete valuation domain we have $J=\ic{J} = J^*$ for all ideals $J$.
Thus,
\[ h^{q_0}\in \left(\mathfrak{m}I^{[q_0]}\right)^* \Rightarrow h_i^{q_0}\in \left(\mathfrak{a}_iI_i^{[q_0]}\right)^* = \mathfrak{a}_iI_i^{[q_0]} \, ,\] by persistence of tight closure \cite[Theorem 1.4.13]{hochhuntight0}, as $S$ and $V_i$ have completely stable weak test elements.
For the valuation $\val:=\val_{V_i}$ on $V_i$ this gives the following inequalities.
\begin{align*}\val(h_i^{q_0}) &\geq \val(\mathfrak{a}_iI_i^{[q_0]})\\
\Rightarrow  q_0\val(h_i) &\geq \val(\mathfrak{a}_i)+q_0\val(I_i)\\
\Rightarrow \val(h_i) &\geq \frac{1}{q_0}\val(\mathfrak{a}_i) + \val(I_i)\\
\Rightarrow \val(h_i) &> \val(I_i).\end{align*}
The last inequality (which might be $\infty > \infty$)  gives \[ \val_i(\varphi(h)) > \val_i(IS) \, . \]
By Lemma \ref{brecor34} the membership $\varphi(h)\in IS$ follows.
Thus $h\in \ax{I}$ and $f=g+h\in \ax{I}$.
\end{proof}

\begin{Theorem}
\label{pTightInAxfinitetype}
Let $I$ be an ideal in a normal domain $R$ of finite type over a perfect field $K$ of characteristic $p$.
Then $  \tight{I}\subseteq \ax{I} $.
\end{Theorem}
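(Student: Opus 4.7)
The plan is to reduce Theorem \ref{pTightInAxfinitetype} to Theorem \ref{pTightInAx} by approximating the arbitrary ideal $I$ by $\mathfrak{M}$-primary ideals $I + \mathfrak{M}^N$, where $\mathfrak{M}$ is a maximal ideal of $R$ containing $I$, and then passing to a limit inside the complete local test ring via Krull's intersection theorem. The point is that in a finitely generated algebra over the perfect field $K$, every residue field at a maximal ideal is a finite extension of $K$ and thus perfect, so the hypotheses of Theorem \ref{pTightInAx} apply automatically to the pairs $(R, J_N)$.

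Taking $f \in \tight{I}$, I invoke \cite[Theorem 4.1(3)]{epshoch} and fix an arbitrary homomorphism $\varphi: R \to S$ where $S$ is a complete, excellent, local, one-dimensional, seminormal ring with maximal ideal $\mathfrak{n}$. Let $\mathfrak{m}' := \varphi^{-1}(\mathfrak{n})$. If $I \not\subseteq \mathfrak{m}'$ there is nothing to show, since then $\varphi(I)S$ contains a unit. Otherwise I pick a maximal ideal $\mathfrak{M} \supseteq \mathfrak{m}'$ and set
\[ J_N := I + \mathfrak{M}^N \quad (N \geq 1). \]
The quotient $R/J_N$ is local Artinian as a quotient of $R/\mathfrak{M}^N$, so $J_N$ is primary to $\mathfrak{M}$. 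Applying Theorem \ref{pTightInAx} to $(R, J_N)$ and using the monotonicity $\tight{I} \subseteq \tight{J_N}$ yields $f \in \ax{J_N}$, whence $\varphi(f) \in J_N S = IS + \mathfrak{M}^N S$.

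In the favourable sub-case $\mathfrak{M} = \mathfrak{m}'$ one has $\varphi(\mathfrak{M}) \subseteq \mathfrak{n}$, so $\mathfrak{M}^N S \subseteq \mathfrak{n}^N$, and Krull's intersection theorem in the complete local Noetherian ring $S$ gives
\[ \varphi(f) \in \bigcap_{N \geq 1} (IS + \mathfrak{n}^N) = IS, \]
as desired. The main obstacle is the sub-case in which $\mathfrak{m}'$ is non-maximal and therefore $\mathfrak{M} \supsetneq \mathfrak{m}'$: then $\varphi(\mathfrak{M})$ contains a unit, $\mathfrak{M}^N S = S$ for every $N$, and the approximation becomes vacuous. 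I expect to overcome this either by invoking a refinement of \cite[Theorem 4.1]{epshoch} that restricts test rings to those whose residue field is a finite extension of $K$ --- which forces the embedding $R/\mathfrak{m}' \hookrightarrow S/\mathfrak{n}$ to exhibit $R/\mathfrak{m}'$ as a $K$-subalgebra of a finite $K$-extension, hence as a field, so that $\mathfrak{m}'$ is maximal after all --- or by localizing $R$ at $\mathfrak{m}'$ and extending the argument of Theorem \ref{pTightInAx} after base-changing to a perfect closure of the residue field.
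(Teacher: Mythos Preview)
Your proposal has a genuine gap, and you identify it yourself: the sub-case where $\mathfrak{m}'=\varphi^{-1}(\mathfrak{n})$ is not maximal is not handled. As written, the argument only goes through in the ``favourable sub-case'' $\mathfrak{M}=\mathfrak{m}'$; the two fixes you sketch at the end are not carried out, so the proof is incomplete.

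Your first suggested fix --- restrict to test rings whose residue field is finite over $K$ --- is precisely the move the paper makes. The paper tests axes closure against seminormal one-dimensional rings $A$ of finite type over $K$ with a single meeting point $\mathfrak{n}$. For such $A$ the residue field $A/\mathfrak{n}$ is a finite extension of $K$, so the induced embedding $R/\mathfrak{m}'\hookrightarrow A/\mathfrak{n}$ exhibits $R/\mathfrak{m}'$ as a $K$-subalgebra of a finite $K$-extension; being a domain finite over a field, it is a field, and $\mathfrak{m}'$ is maximal. Once this is in place, however, your approximation by $J_N=I+\mathfrak{M}^N$ and the Krull intersection step are unnecessary: the Huneke--Vraciu decomposition $\tight{I}=I+\sptight{I}$ is valid for \emph{arbitrary} ideals in an excellent normal local ring with perfect residue field, so after localizing at $\mathfrak{m}'$ one can run the valuation argument from the proof of Theorem~\ref{pTightInAx} directly on $I$ itself. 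That is exactly what the paper does --- it re-enters the proof of Theorem~\ref{pTightInAx} rather than invoking its statement.

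Your approximation route is not wrong: once $\mathfrak{m}'$ is known to be maximal, the Krull intersection argument does give a correct reduction to Theorem~\ref{pTightInAx} applied as a black box to the primary ideals $J_N$. It is simply a detour around the observation that the primary hypothesis in Theorem~\ref{pTightInAx} was only ever used to pin down which maximal ideal to localize at. Your second suggested fix (localize at a non-maximal $\mathfrak{m}'$ and pass to a perfect closure of $\kappa(\mathfrak{m}')$) is much less promising: $\kappa(\mathfrak{m}')$ is then a function field of positive transcendence degree over $K$, and arranging the hypotheses needed for special tight closure there would require real additional work.
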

\begin{proof}
Let $f \in I^*$. 
We work with ring of axes of finite type over $K$. 
So let $A$ be a seminormal onedimensional ring of finite type over $K$ with only one meeting point corresponding to a maximal ideal $\mathfrak{n} $ of $A$ and let $\varphi:R \rightarrow A$ be a ring homomorphism. 
Then $\mathfrak{m} :=\varphi^{-1}(\mathfrak{n})$ is a maximal ideal of $R$. 
The residue field at $\mathfrak{m}$ is perfect and by the persistence of tight closure we also have $f \in IR_{\mathfrak m}$. 
So we work with the factorization $R_{\mathfrak {m} } \rightarrow A $ and since all conditions for special tight closure hold true in $R_{\mathfrak {m} }$ we can proceed as in the previous proof.
\end{proof}

\begin{Example}
In the proof of the Theorem we use that the special tight closure gives us elements which are in some way deeper inside the tight closure. 
The inner integral closure $\iic{I}$ also measures ``deeper" elements in a similar way and because $\iic{I}\subseteq \ax{I}$ it seems one might think to prove the statement also for inner integral closure instead of axes closure. 
For $\mathfrak{m}$-primary ideals this is possible but in general it doesn't work. 
We have the following simple example which was given to us by Neil Epstein that shows $\sptight{I}\nsubseteq \iic{I}$: 
Let $R=K[[X,Y]], I=(X),\mathfrak{m}=(X,Y)$. 
Then $XY\in \sptight{I}$ (we can put $c=q_o=1$ in the definition of special tight closure), but $XY\notin \iic{I}$.
\end{Example}

\section*{Equal Characteristic 0}

In the following we show a result similar to Theorem \ref{pTightInAxfinitetype} for a ring $R$ of finite type over a field $K$ of characteristic 0.
For this we use the notion of tight closure in characteristic 0 developed in several variants in \cite{hochhuntightbrian} and in more detail in \cite{hochhuntight0}. 

The general idea is that of reduction modulo $p$.
That is, we construct a finitely generated $\Z$-algebra $A$ and a free subalgebra $R_A$ of $R$ over $A$, which tensored with our field $K$ of characteristic 0 becomes the original ring $R$. 
For every prime ideal $\mu$ in $A$ we get a field $\kappa(\mu)=A_\mu/(\mu A_\mu)$.
If we tensor $R_A$ with one of these fields $\kappa:=\kappa(\mu)$ we get $R_\kappa := R_A\otimes_A \kappa$.

We have some freedom in the choice of $A$ which we will use to ensure that $R_A$ itself or the fibers $R_\kappa$ have certain properties (i.e. they hold for $\kappa$ in a Zariski dense open subset of $\MaxSpec A$). 
For deciding whether an element $f \in R$ is in the tight closure of an ideal $I\subseteq R$ we take an algebra $A$ which contains an element $f_A\in R_A$ with $f_A\otimes 1 = f$ in $R_K$ and $I_A\subseteq R_A$, which is meant to say that we have generators for $I$ in $R_A$. 
We say that $u\in \Ktight{K}{I}$ if $u_\kappa \in \tight{I_\kappa}$ in the fibers over a dense open subset of $ \Spec A$. 

We want to check whether an element is in the axes closure and for this we need a suitable version of axes ring.
The finitely generated version for axes rings in \cite[Theorem 4.1(6)]{epshoch} is that of finitely generated étale extensions $S$ of polynomial axes rings $$T=L[X_1,\ldots,X_n]/(X_iX_j,i\neq j)$$ over the algebraic closure $L$ of $K$.

\begin{Theorem}
\label{0TightInAx}
Let $I$ be an ideal in a geometrically normal ring $R$ of finite type over a field $K$ of characteristic $0$. Then $\Ktight{K}{I}\subseteq \ax{I}$.
\end{Theorem}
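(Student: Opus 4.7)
The strategy is reduction to positive characteristic, applying Theorem \ref{pTightInAxfinitetype} fibrewise to a spread-out of the data.

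Let $u \in \Ktight{K}{I}$, and fix a homomorphism $\varphi: R \to S$ with $S$ a finitely generated \'{e}tale extension of a polynomial axes ring $T = L[X_1,\ldots,X_n]/(X_iX_j : i \neq j)$ over the algebraic closure $L$ of $K$; by \cite[Theorem 4.1(6)]{epshoch} it suffices to test axes membership against such $S$. The plan is to enlarge $A$ (the finitely generated $\Z$-subalgebra witnessing $u \in \Ktight{K}{I}$) and invert finitely many elements of it so that: (i) descent data $R_A$, $I_A$, $u_A$, $S_A$, $\varphi_A: R_A \to S_A$ exist with $R_A \otimes_A K \cong R$, $S_A \otimes_A L \cong S$, and $\varphi_A$ recovering $\varphi$ after base change; (ii) by generic freeness, $R_A$, $S_A$, and the quotient $M := S_A/\varphi_A(I_A)S_A$ are free $A$-modules; (iii) for every $\mu$ in a dense open subset $U \subseteq \Spec A$, the fibre $R_\kappa = R_A \otimes_A \kappa(\mu)$ is normal of finite type over the (automatically perfect) finite field $\kappa$, the fibre $S_\kappa$ is seminormal one-dimensional of finite type over $\kappa$, and $u_\kappa \in \tight{I_\kappa}$. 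Each such condition cuts out a dense open locus of $\Spec A$ because $R$ is geometrically normal, $S$ is seminormal one-dimensional of finite type over $L$, and $u \in \Ktight{K}{I}$ by hypothesis.

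For each $\mu \in U$, Theorem \ref{pTightInAxfinitetype} applied to $R_\kappa$ gives $u_\kappa \in \tight{I_\kappa} \subseteq \ax{I_\kappa}$. Localizing $S_\kappa$ at each of its meeting points (and, if necessary, passing to the algebraic closure $\overline{\kappa}$, both permitted by \cite[Theorem 4.1]{epshoch}) produces valid test rings along $\varphi_\kappa$, so $\varphi_\kappa(u_\kappa) \in \varphi_\kappa(I_\kappa) S_\kappa$ for every $\mu \in U$. Equivalently, the class $\bar v$ of $\varphi_A(u_A)$ in the $A$-free module $M$ vanishes in $M \otimes_A \kappa(\mu)$ for every closed point $\mu \in U$.

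To finish, lift to characteristic zero: since $M$ is $A$-free and finitely generated $\Z$-algebras are Jacobson, the closed points of $U$ are Zariski dense in $\Spec A$, so any element of a free $A$-module whose coordinates vanish modulo a Zariski dense set of maximal ideals of $A$ must itself be zero. Hence $\bar v = 0$ in $M$, i.e.\ $\varphi_A(u_A) \in \varphi_A(I_A) S_A$; tensoring with $L$ yields $\varphi(u) \in \varphi(I)S$, as required. The principal technical obstacle is this final lifting step, where one needs generic freeness of $M$ over $A$ together with the Jacobson density of closed points; a secondary subtlety is ensuring that the reductions $S_\kappa$ are legitimate axes test rings for $\ax{I_\kappa}$, which may require localizing at each meeting point and extending scalars to $\overline{\kappa}$.
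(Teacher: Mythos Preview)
Your proposal is correct and follows essentially the same route as the paper: spread out $R$, $I$, $u$, the test ring, and $\varphi$ over a finitely generated $\Z$-algebra $A$, apply the positive-characteristic containment $\tight{I_\kappa}\subseteq\ax{I_\kappa}$ on closed fibres, and lift the resulting ideal membership back to characteristic zero via generic freeness. The only cosmetic differences are that the paper first descends the test ring from $L=\overline{K}$ to a finite extension $K'/K$ before forming $A$, preserves the \'etale structure over the polynomial axes model (so the fibres are automatically valid test rings by \cite[Theorem 4.1(6)]{epshoch} without the extra localization/base-change step you sketch), and phrases the lifting as a contradiction using generic freeness of $(f,IT)/IT$ rather than your direct Jacobson-density argument---but these are equivalent.
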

\begin{proof}
To test whether an element $f\in \Ktight{K}{I}$ is in the axes closure, we take a $K$-algebra homomorphism $\varphi:R\rightarrow T$ to a finitely generated étale extension $T$ of a polynomial axes ring
\[ S=L[X_1,\ldots, X_n]/(X_iX_j,i\neq j) \] over the algebraic closure $L$ of $K$ (these rings characterize axes closure due to \cite[Theorem 4.1(6)]{epshoch}).
We may assume that $T$ is defined and étale over a polynomial ring of axes over a finite extension field $K'$ of $K$.
Moreover, we may assume that $\varphi$ is defined over $K'$.
We replace $K$ by $K'$ and denote it $K$ again.

We perform the reduction described above such that we will get a finitely generated $\Z$-Algebra $A$ and the following diagram of finitely generated $A$-algebras.
\begin{center}
\begin{tikzcd}
R_A \arrow{r}{\varphi_A} & T_A \\
 & S_A \arrow[hookrightarrow]{u}{\psi_A}
\end{tikzcd}
\end{center}
By further shrinking, using the characterization with K\"ahler differentials, we make sure that $\psi_A$ is étale. We also choose $A$ so that $R_\kappa$ will be normal for a dense open subset of fibers $\kappa$ (\cite[Propositon 2.3.17]{hochhuntight0}).

We take a closed fiber $R_\kappa$ of $A\rightarrow R_{A}$ with characteristic $p$ for which $f_\kappa\in \tight{I_\kappa}$.
By Theorem \ref{pTightInAx} we have $f_\kappa\in\ax{I_\kappa}$.

Going from $R_A$ to $R_\kappa$ is a base change, i.e. done by tensoring with $\kappa$. 
Thus the induced morphism $\psi_\kappa$ is étale. 
The inclusion $f_\kappa\in I_\kappa T_{\kappa}$ follows.

Now assume that $f\notin IT$.
Then $(f,IT)/IT$ is nonzero and free when localized at a single element. 
Thus for a dense open subset of fibers it will still be nonzero. 
But we have shown $f_\kappa\in I_\kappa T_\kappa$ for the fibers over a dense open subset, so we have a contradiction. 
It follows that $f\in IT$.
\end{proof}

\begin{Theorem}
For an ideal $I$ primary to a maximal ideal in a normal, affine $\C$-algebra $R$  we have $\tight{I}\subseteq \cont{I}=\ax{I}$.
\begin{proof}
This follows immediately from \cite[Corollary 7.14]{epshoch} and Theorem \ref{0TightInAx}.
\end{proof}
\end{Theorem}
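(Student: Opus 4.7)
The plan is essentially a two-line chain: combine Theorem \ref{0TightInAx} with the Epstein--Hochster equality of continuous and axes closure for primary ideals. The main point is that all hypotheses line up well when the base field is $\C$, so there is no substantive obstacle, only bookkeeping.

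First I would observe that a normal affine $\C$-algebra is automatically geometrically normal, because $\C$ is algebraically closed of characteristic $0$, so base change to any field extension preserves normality. Hence $R$ satisfies the hypotheses of Theorem \ref{0TightInAx}, and with $K=\C$ the characteristic-zero tight closure $\Ktight{\C}{I}$ is precisely the tight closure $\tight{I}$ in the sense used in the statement. Theorem \ref{0TightInAx} then yields
\[ \tight{I}\;=\;\Ktight{\C}{I}\;\subseteq\;\ax{I}. \]

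Next, for the equality $\cont{I}=\ax{I}$ I would cite \cite[Corollary 7.14]{epshoch}, which asserts exactly that in a finitely generated (normal) $\C$-algebra the continuous closure and the axes closure of an ideal primary to a maximal ideal coincide. Since our $I$ is primary to a maximal ideal by hypothesis, this applies directly and gives $\cont{I}=\ax{I}$.

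Combining the two displays finishes the proof: $\tight{I}\subseteq\ax{I}=\cont{I}$. The only conceptual step worth flagging is the verification that Theorem \ref{0TightInAx} is applicable, i.e.\ that normality over $\C$ upgrades to geometric normality; everything else is a direct invocation of results already stated or cited.
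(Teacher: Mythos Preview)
Your proposal is correct and follows exactly the paper's approach: apply Theorem \ref{0TightInAx} for the inclusion $\tight{I}\subseteq\ax{I}$ and \cite[Corollary 7.14]{epshoch} for the equality $\cont{I}=\ax{I}$. You even make explicit the one point the paper's one-line proof leaves implicit, namely that normality over $\C$ is automatically geometric normality, so the hypothesis of Theorem \ref{0TightInAx} is satisfied.
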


\begin{Example}
Let $R=K[X,Y,Z]/(X^m+Y^m+Z^m)$, $m \geq 2$, and $I=(X,Y)$. 
Assume that $K$ is algebraically closed and that the characteristic does not divide $m$. Then $Z^2\in (X,Y)^*$ but $Z\notin (X,Y)^*$. 
These are well known results with several proofs. We revover the second part by showing that $Z$ is not in the axes closure of $(X,Y)$. We write 
\[ X^m+Z^m=(X-\xi_1 Z)\cdots (X-\xi_m Z) \]
with some roots of unity $\xi_1,\ldots , \xi_m $.
We have a map 
\[ R\rightarrow S:=K[U,V]/(UV),X\rightarrow \frac{\xi_2U-\xi_1V}{\xi_2-\xi_1},Z\rightarrow \frac{U-V}{\xi_2-\xi_1}, Y\rightarrow 0 \, . \]

This is well defined as we have $U=X-\xi_1 Z$ and $V=X-\xi_2 Z$ and the map sends $X^m+Y^m+Z^m=(X-\xi_1 Z)(X-\xi_2 Z)\cdot Q+Y^m$ to a multiple of $UV$.

Modulo $IS=\left(\frac{\xi_2U-\xi_1V}{\xi_2-\xi_1}\right)$, the ring $S$ becomes $S' \cong K[V]/(V^2)$ but the image of $Z$ in $S'$ is not $ 0$, thus $ZS\notin IS$.

This means, that there is an axes ring for which $Z$ is not in the image of the ideal, thus $Z$ can't be in the axes closure of $I$.
Thus, by Theorem \ref{0TightInAx}, it can also not be in the tight closure, since $R$ is normal.
\end{Example}

\begin{Example}
The containment of tight closure can not be extended to one-dimensional test rings with regular components, we can not drop the condition that the curves meet transversally.
To see this, let  $R=K[X,Y,Z]/(X^3+Y^3-Z^3)$ with a field $K$ of characteristic $\neq 3$ containing a primitive third root of unity $\zeta$ and let again
$I=(X,Y)$. Then $Z^2 \in (X,Y)^*$. However, if we go modulo $Y$ we get
\[A=K[X,Z]/(X^3-Z^3) = K[X,Z](X-Z)(X- \zeta Z)(X- \zeta^2 Z)\]
and its spectrum consists of three lines lying in a plane meeting in one point.
In this ring we have $Z^2 \notin IA=(X)$.
\end{Example}

We conclude with some remarks and questions.

\begin{Remark}
Our main results Theorem \ref{pTightInAxfinitetype} and Theorem \ref{0TightInAx} are also true under the weaker condition that $R$ is a domain of finite type over a field with the property that its seminormalization is already normal.
This rests upon the fact that any ringhomomorphism to an axes ring factors through the seminormalization.
This property means basically that the ring is unibranched in the sense that the completion is a domain.
\end{Remark}

\begin{Remark}
One might ask whether the used result of Huneke and Vraciu is also true under weaker conditions.
Is it true for a complete domain?
Is it true for an excellent local analytically irreducible domain?
Is it true without the assumption that the residue field is perfect?
The example $K[X,Y,Z]/(X^2 - YZ^2)$ localized at $(X,Y-1,Z)$ mentioned in the introduction shows that it can not be true for local domains essentially of finite type without any further assumption.
This follows from the proof of Theorem \ref{pTightInAx}, but can also be seen directly.
We write $W=Y-1$ and work in the ring $S=K[X,W,Z]/(X^2-WZ^2 -Z^2)$ localized at $(X,W,Z)$.
We have $X \in (Z)^*$.
We claim that it is not possible to write $X=ZH + X-ZH$ with $H\in S$ and $X-ZH$ inside the special tight closure of $(Z)$.

To understand the special tight closure of $(Z)$ one has to look at the tight closure $(Z Z^{q_0}, W Z^{q_0}, XZ^{q_0})^*$ for various $q_0=p^{e_0}$. 
The tight closure can be computed in the normalization $K[U,Z]$, the extended ideal is 
\[ (Z^{q_0+1}, (U^2-1) Z^{q_0} , UZ^{q_0+1})=  (Z^{q_0+1}, (U^2-1) Z^{q_0} ) \, . \]
Now assume that $X-ZH \in (Z)^{sp *} $, so
\[  X^q -Z^{q_0} H^{q_0}  \in  (Z^{q_0+1}, (U^2-1) Z^{q_0} )  \,      \]
for some $q_0$.
Then using $X=ZU$ and cancelling with $Z^{q_0} $ gives
\[  U^{q_0} - H^{q_0} \in    (Z, U^2-1) \, . \]
Writing $H=ZA+WB +P(X)$ (with $A,B \in R$ and $P(X)$ a polynomial in $X$) shows that the containment is equivalent to $ U^{q_0} - P_0^{q_0} \in (Z, U^2-1) $, which is for odd characteristic a contradiction.
\end{Remark}

\begin{Remark}
Is tight closure for normal affine $\C$-algebras always inside the continuous closure? 
Is tight closure for normal noetherian rings always inside the axes closure? 
\end{Remark}

\begin{Remark}
As mentioned in the introduction, J. Kollár has given in \cite{kollar} an algebraic characterization of continuous closure, meaning an algebraically defined closure operation for varieties which coincides with the continuous closure if the base field is $\mathbb C$. A natural question is whether tight closure of a normal domain, in particular in positive characteristic, fulfills this algebraic characterization.
\end{Remark}

\begin{Remark}
Is solid closure for normal noetherian rings inside the axes closure? 
Solid closure agrees with tight closure in positive characteristic, but can be strictly larger than tight closure in characteristic zero. 
An example of P. Roberts shows ni \cite{roberts} that in the polynomial ring $K[X,Y,Z]$ over a field $K$ of characteristic zero the inclusion $X^2Y^2Z^2 \in (X^3,Y^3,Z^3)^{\rm sc}$ holds. 
In this example we have indeed the containment in the axes closure and in the continuous closure.
\end{Remark}

\begin{Remark}
The completion of a ring of axes is the completion of a (local version of a) one-dimensional Stanley-Reisner ring. 
Can the inclusion of the tight closure (in the normal case) inside the axes closure be strengthend to an inclusion inside the Stanley-Reisner closure $I^{\rm SR}$ of an ideal?
This closure is defined by taking up to completion the Stanley-Reisner rings as a test category. 
Note that $I \subseteq I^{\rm SR} \subseteq I^{\rm ax} \cap I^{\rm reg}$ and $  I^{*} \subseteq I^{\rm ax} \cap I^{\rm reg}$. 
If we consider Stanley-Reisner rings where the sheets always meet in one point, then the arguments used in this paper go through. 
The main problem is to find a replacement for Lemma \ref{brecor34} in this setting.
\end{Remark}

\begin{Remark}
Is there a reasonable subclass of continuous functions which defines a tight closure type theory for normal $\C$-algebras of finite type? 
In particular, for a smooth variety it should not change the ideals.
\end{Remark}


\bibliographystyle{plain}

\end{document}